\begin{document} 
 
\theoremstyle{plain} \newtheorem{Thm}{Theorem}
\newtheorem{prop}{Proposition}
\newtheorem{lem}{Lemma}
\theoremstyle{remark} \newtheorem*{pf}{Proof}
\renewcommand\theenumi{(\alph{enumi})}
\renewcommand\labelenumi{\theenumi}
\renewcommand{\qedsymbol}{}
\renewcommand{\qedsymbol}{\ensuremath{\blacksquare}}
\title{Elementary Abelian p-groups of rank 2p+3 \\ are not CI-groups}
\author{Gábor Somlai\\ 
Department of Algebra and Number Theory\\
E\"otv\"os University, Budapest, Hungary\\
email: zsomlei@gmail.com} 
\maketitle
\begin{abstract}

For every prime $p > 2$ we exhibit a Cayley graph of
$\mathbb{Z}_p^{2p+3}$  which is not a CI-graph. This proves that an
elementary Abelian $p$-group of rank greater than or equal to
$2p+3$ is not a CI-group. The proof is elementary and uses only
multivariate polynomials and basic tools of linear algebra. 
Moreover, we apply our technique to give a uniform explanation for the
recent works concerning the bound.

\end{abstract}

\section*{Introduction}

Let $G$ be a finite group and $S$ a subset of $G$. The Cayley graph
$Cay(G,S)$ is defined by having the vertex set $G$ and $g$ is adjacent
to $h$ if and only if $g h^{-1} \in S$. $S$ is called the connection
set of the Cayley graph $Cay(G,S)$. Every right translation is an
automorphism of $Cay(G,S)$, so the automorphism group of every
Cayley graph of $G$ contains a regular subgroup isomorphic to $G$
and this property characterises the Cayley graphs of $G$.

It is clear that $Cay(G,S) \cong Cay(G,S^{\sigma } )$ for every
$\sigma \in Aut(G)$ and these isomorphisms of the Cayley graphs are
called Cayley isomorphisms. A Cayley graph $Cay(G,S)$ is said to be a
CI-graph if for each $T \subset G$ the Cayley graphs $Cay(G,S)$ and
$Cay(G,T)$ are isomorphic if and only if there is an automorphism
$\sigma$ of $G$ such that $S^{\sigma } =T$. Furthermore, a group $G$
is called CI-group if every Cayley graph of $G$ is a CI-graph.

For our discussion two previous results are relevant.
It has been proved that if $G$ is a CI-group then the same holds
for every subgroup of $G$. Babai and Frankl proved in \cite{1} that
the Sylow subgroups of a CI-group can only be $\mathbb{Z}_4 $,
$\mathbb{Z}_8 $, $\mathbb{Z}_9$, $\mathbb{Z}_{27} $, the
quaternion group $Q$ or an elementary Abelian $p$-group and they asked
whether every elementary Abelian $p$-group is CI.
  
Hirasaka and Muzychuk proved in \cite{3} that $\mathbb{Z}_p^4 $ is a
CI-group for every prime $p$. On the other hand Muzychuk \cite{4}
proved that an elementary Abelian $p$-group of rank $2p-1+
\binom{2p-1}{p}$ is not a CI-group and most recently as a
strengthening of this result P. Spiga \cite{6} showed that if $n \ge
4p-2$ then $\mathbb{Z}_p^n$ is not a CI-group. The problem of
determining whether or not an elementary Abelian group
$\mathbb{Z}_p^n$ is CI is solved if $p=2$ as the CI property holds for
$\mathbb{Z}_2^5$, see \cite{2}, and a non-CI-graph for
$\mathbb{Z}_2^6$ was constructed by Nowitz \cite{5}. 

Further improving this bound we have the following.

\begin{Thm}\label{thm1}
For every prime $p >2$ the group $\mathbb{Z}_p^{2p+3}$ has a Cayley
graph of degree $(2p+3)p^{p+1}$ which is not a CI-graph.
\end{Thm}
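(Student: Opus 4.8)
The plan is to prove Theorem~\ref{thm1} by exhibiting explicitly a symmetric connection set $S \subseteq \mathbb{Z}_p^{2p+3}$ with $|S| = (2p+3)p^{p+1}$ together with a companion set $T$, and then establishing two independent facts: that the graphs $Cay(\mathbb{Z}_p^{2p+3}, S)$ and $Cay(\mathbb{Z}_p^{2p+3}, T)$ are isomorphic, while $S$ and $T$ lie in different orbits of $\mathrm{Aut}(\mathbb{Z}_p^{2p+3}) = GL(2p+3, p)$. Since by definition a Cayley isomorphism is one induced by a group automorphism, a single such pair shows that $Cay(\mathbb{Z}_p^{2p+3}, S)$ fails the CI property, which is exactly the assertion of the theorem. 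Throughout I would write the group additively and identify its automorphisms with invertible $\mathbb{Z}_p$-linear maps, so that ``$S^\sigma = T$'' means $\sigma(S) = T$ for some $\sigma \in GL(2p+3,p)$.

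To build $S$ I would exploit the decomposition $2p+3 = 2(p+1)+1$ suggested by the target degree. Identify two blocks of $p+1$ coordinates with the coefficient space of univariate polynomials of degree at most $p$ over $\mathbb{Z}_p$, and retain the final coordinate as a scalar parameter. The set $S$ is then described as a union of translates of $(p+1)$-dimensional subspaces, each translate being the locus where a prescribed evaluation or incidence condition relating the two polynomials and the scalar holds; the factor $p^{p+1}$ is the size of one such subspace and the factor $2p+3$ bookkeeps the family, while symmetry $S = -S$ is arranged by closing the defining conditions under negation. The companion set $T$ is a second explicit union of translates of the same dimension, obtained by permuting the combinatorial data (the evaluation points, or the incidence pattern) that index the pieces of $S$.

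The isomorphism $Cay(\mathbb{Z}_p^{2p+3}, S) \cong Cay(\mathbb{Z}_p^{2p+3}, T)$ would be produced not by a global linear map but by a bijection $\phi$ of the group that is affine on each coset of a fixed subgroup $H$, with the affine translation depending on the coset. One selects $H$ and the coset-wise shifts so that the adjacency equivalence $u - v \in S \iff \phi(u) - \phi(v) \in T$ holds for every pair $u,v$; because the shift varies from coset to coset, $\phi$ is globally non-affine. Verifying this adjacency-preservation is the first technical heart of the argument: it reduces, coset by coset, to checking that the permutation of the polynomial data carries the incidence conditions cutting out $S$ precisely onto those cutting out $T$, and this is a finite family of polynomial identities over $\mathbb{Z}_p$ that can be settled by direct computation.

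The main obstacle is the second claim, that no $\sigma \in GL(2p+3, p)$ satisfies $\sigma(S) = T$. For this I would isolate a $GL$-invariant of a connection set that separates $S$ from $T$. The natural candidate is the multiset of $(p+1)$-dimensional subspaces occurring as the directions of the translates that compose the set, together with their mutual intersection pattern -- equivalently, the number of solutions inside the set to a fixed homogeneous linear system. Since any linear map permutes subspaces and preserves such incidence counts, this quantity is a genuine invariant of the $GL$-orbit, whereas the piecewise-affine $\phi$ used to pass from $S$ to $T$ is engineered to change it. Making this invariant explicit and computing it on both sides is the crux of the proof, and I expect it, rather than the construction or the isomorphism check, to demand the most care, since it is precisely here that the value $2p+3$ of the rank is forced: the invariant must admit enough room to differ on $S$ and $T$ while the pieces remain realizable inside a space of dimension $2(p+1)+1$. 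Once the invariant is shown to take distinct values on $S$ and $T$, no linear automorphism can identify them, and Theorem~\ref{thm1} follows.
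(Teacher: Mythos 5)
Your overall architecture matches the paper's: $S$ and $T$ are unions of $2p+3$ translates of $(p+1)$-dimensional subspaces inside a decomposition $\mathbb{Z}_p^{2p+3} = U \oplus V$ with $\dim U = p+1$, $\dim V = p+2$, and the isomorphism is a bijection acting on each coset of $V$ by a translation depending on the coset. But in both halves the decisive content is missing. For the isomorphism, the whole difficulty is to produce coset shifts $r_i(x_1,\dotsc,x_{p+1})$ whose finite differences $\Delta_{\underline{\alpha}} r_i$ realize exactly the required corrections; the paper builds them as degree-$p$ polynomials with coefficients $(1-k(\underline{x}^{\underline{n}}))c_{\underline{n}}$ and $(2 - k(\underline{x}^{\underline{n}}))c_{\underline{n}}$, where $c_{\underline{n}} = (p-1)!/(n_1!\cdots n_{p+1}!)$, and the verification rests on the identities of Lemmas \ref{lem1} and \ref{lem2} and on $(t+p)^p \equiv t^p \pmod{p^2}$. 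Calling this ``a finite family of polynomial identities that can be settled by direct computation'' presupposes that such shifts exist; their existence is the reason the construction works at rank $2p+3$ and cannot be taken for granted.

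The more serious gap is in the non-CI half. The invariant you propose --- the multiset of direction subspaces of the translates together with their mutual intersection pattern --- provably cannot separate $S$ from $T$: in any construction of this type (and in the paper's) $T$ is obtained from $S$ by replacing a single piece $C_0$ by a parallel coset $C_1$ of the \emph{same} subspace, so the two families have literally identical directions, and the $2p+3$ pieces are pairwise disjoint in both cases, so the intersection pattern is trivial and identical as well. The obstruction is not an invariant of the configuration of directions; it is an affine compatibility condition. The paper shows that any linear $\sigma$ with $\sigma(S)=T$ must preserve $V$ (Lemma \ref{lem3}), must act on $U$ by a permutation matrix (Lemma \ref{lem4}), and may be normalized to act trivially on $U$; then the membership conditions $\sigma(e_i) \in A_i$ and $\sigma(\sum_{j\ne i}e_j)\in B_i$ impose $2p+2$ linear constraints on the block $M_{2,1}$ whose sum forces $\left< M_{2,1}\sum_i e_i, \sum_j f_j\right> = 0$, while $\sigma(\sum_j e_j)\in C_1$ forces the same quantity to equal $1$. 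In other words, the defining functionals of the pieces satisfy a linear dependence whose constant terms sum to $0$ for $S$ and to $1$ for $T$. Your fallback phrase ``the number of solutions inside the set to a fixed homogeneous linear system'' could in principle encode such a dependence, but you name no system, and producing one that works is exactly the missing idea. As a minor point, requiring $S=-S$ is unnecessary for Theorem \ref{thm1}, which concerns directed Cayley graphs; the paper only obtains the undirected version (Theorem \ref{thm2}) for $p>3$.
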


The proof of the theorem is elementary and uses only the definition of
the CI property. We will construct two isomorphic Cayley graphs. The 
connection sets in both graphs are the union of affine hyperplanes in
$\mathbb{Z}_p^{2p+3}$  and the isomorphism between the Cayley graphs
is given in terms of polynomials. Finally, the proof that our Cayley
graphs are not CI graphs uses only elementary tools of linear algebra. 
In addition, we will indicate how the previous results of Muzychuk and
Spiga can be obtained applying our technique.

\section*{The construction}

Let $U \cong \mathbb{Z}_p^{p+1}$ and $V \cong
\mathbb{Z}_p^{p+2}$, then $U$ and $V$ can be regarded as vector spaces over
the field $\mathbb{Z}_p$ with bases $\{ e_1,e_2, \dotsc ,e_{p+1} \}$
  and $\{ f_0,f_1, \dotsc ,f_{p+1} \}$, respectively. We endow $V$ with
  the natural bilinear form given as follows: 
\[ \left< \sum_{i=0}^{p+1} \alpha_i f_i, \sum_{i=0}^{p+1}
  \beta_i f_i \right> = \sum_{i=0}^{p+1} \alpha_i \beta_i \mbox{.} \]
\\ Let us define the following affine subspaces in $U  \bigoplus V$ :
\begin{flalign*}
A_i &=e_i+ \left\{ \, v \in V \mid \left< v ,f_0
+f_i \right> =0 \, \right\} \mbox{,} && (i=1, \dotsc ,p+1) \\
B_i &=  \sum_{j \ne i} e_j + \left\{ \, v \in V \Bigg|
\left< v ,f_i + \sum_{j=0}^{p+1} f_j \right>  =0 \, \right\} \mbox{,}
&& (i=1, \dotsc , p+1) \\ C_0 &=
\sum_{j=1}^{p+1} e_j+ \left\{ \, v \in V \Bigg|
\left< v ,\sum_{j=0}^{p+1} f_j \right> =0 \, \right\} \mbox{,} \\ C_1 &=
\sum_{j=1}^{p+1} e_j+ \left\{ \, v \in V \Bigg|
\left< v ,\sum_{j=0}^{p+1} f_j \right>  =1 \, \right\} \mbox{.}
\end{flalign*}

Now 
\begin{equation} \label{set} S=\bigcup_{i=1}^{p+1} ( A_i \cup B_i )
  \cup C_0  \quad \mbox{ and } \quad T=\bigcup_{i=1}^{p+1} ( A_i \cup
  B_i )  \cup C_1
 \end{equation} 
will be the connection sets of two Cayley graphs defined on $G=U
\bigoplus V$. Note that $\left| S \right| = \left| T \right| = (2p+3)
p^{p+1}$ as desired.

We are going to show that $Cay(G,S) \cong Cay(G,S)$ but there is no
automorphism of $G$ mapping $S$ to $T$.

\section*{Preliminary facts}
In this section  we introduce some notation concerning polynomials and
we establish certain equations over the field $\mathbb{Z}_p$. These
will be used in the proof of the isomorphism between the two Cayley
graphs.

For a series of integers $\underline{n} := (n_1, \dotsc, n_{p+1})$
we denote $\underline{x}^{ \underline{n}}:=x_1^{n_1} \cdots
x_{p+1}^{n_{p+1}}$ and let $k \left( \underline{x}^{ \underline{n}}
\right) = \left| \left\{ \, i \mid n_i > 0 \, \right\} \right| $
denote the number of variables occuring in $
\underline{x}^{\underline{n}}$. Let $\mathcal{M}$ be the set of
monomials of degree $p$ involving at least two variables and for each
$i=1, \dotsc ,p+1$ we cut it into two subsets $\mathcal{M}=
\mathcal{M}_i^0 \cup \mathcal{M}_i^{+} $ , where
$\mathcal{M}_i^0=\left\{ \, \underline{x}^{\underline{n}} \mid  n_i=0
  \, \right \}$ and $\mathcal{M}_i^{+} = \left\{ \,
  \underline{x}^{\underline{n}} \mid n_i > 0  \, \right \}$. For a
monomial $\underline{x}^{\underline{n}} \in \mathcal{M}$ we define the
number $c_{\underline{n}} = \frac{(p-1)!}{n_1 ! \cdots n_{p+1} !}$. A
well known consequence of the Multinomial Theorem is that
$\frac{p!}{n_1 ! \cdots n_{p+1} !}$ is an integer. If
$\underline{x}^{\underline{n}} \in \mathcal{M}$, then $k \left(
  \underline{x}^{ \underline{n}} \right) \ge 2$ so $p$ does not divide
the denominator of $c_{\underline{n}}$ and hence
$c_{\underline{n}}$ is an integer.  Finally, for $\underline{\alpha }
\in \mathbb{Z}_p^k$ and $f(\underline{x} ) \in \mathbb{Z}_p [x_1,
\dotsc ,x_k ]$ we denote $$ \Delta_{\underline{\alpha } } f
(\underline{x} ) =f ( \underline{x} + \underline{\alpha} )-f(
\underline{x} ) \mbox{.} $$.

\begin{lem}\label{lem1}
Let $s= \sum_{i=1}^{p+1} x_i$ and $s_i=s-x_i= \sum_{j \ne i} x_j$.
\begin{enumerate}
\item \[ s^p= \sum_{j=1}^{p+1} {x_j}^p + \sum_{\underline{x}^{\underline{n}} \in
  \mathcal{M}} p c_n \underline{x}^{\underline{n}} \mbox{.} \]
\item \[ {s_i}^p= \sum_
{j \ne i} {x_j}^p + \sum_{\underline{x}^{\underline{n}} \in
  \mathcal{M}_{i}^{0}} p c_{\underline{n}}
\underline{x}^{\underline{n} } \mbox{.} \]
\end{enumerate}
\end{lem}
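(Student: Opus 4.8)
The plan is to obtain both identities directly from the Multinomial Theorem, proving them first as identities in $\mathbb{Z}[x_1, \dotsc ,x_{p+1}]$; they then hold over any coefficient ring, in particular over $\mathbb{Z}_p$, and I retain the explicit factor $p$ in the coefficients $p c_{\underline{n}}$ so as to record exactly where the single power of $p$ sits. For part (a) I would begin with the expansion
\[ s^p = \left( \sum_{i=1}^{p+1} x_i \right)^p = \sum_{n_1 + \dotsb + n_{p+1} = p} \frac{p!}{n_1 ! \cdots n_{p+1} !} \, \underline{x}^{\underline{n}} \mbox{,} \]
and then split the index set of this sum according to the value of $k \left( \underline{x}^{\underline{n}} \right)$.

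The monomials with $k \left( \underline{x}^{\underline{n}} \right) = 1$ are precisely the pure powers $x_j^p$, one for each $j$, and for these the multinomial coefficient is $\frac{p!}{p!} = 1$; they therefore contribute exactly $\sum_{j=1}^{p+1} x_j^p$. Every remaining monomial of degree $p$ has $k \left( \underline{x}^{\underline{n}} \right) \ge 2$ and so lies in $\mathcal{M}$. The only computation worth recording is the factorisation of the coefficient on these mixed monomials: for $\underline{x}^{\underline{n}} \in \mathcal{M}$ one has
\[ \frac{p!}{n_1 ! \cdots n_{p+1} !} = p \cdot \frac{(p-1)!}{n_1 ! \cdots n_{p+1} !} = p \, c_{\underline{n}} \mbox{,} \]
which is exactly the coefficient in the statement; the integrality of $c_{\underline{n}}$ was already established in the preliminary discussion. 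Substituting the two pieces back yields (a).

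For part (b) I would run the identical argument on $s_i = \sum_{j \ne i} x_j$, now viewed as a sum of the $p$ variables $\{ x_j \mid j \ne i \}$. The pure powers contribute $\sum_{j \ne i} x_j^p$, while the mixed monomials of degree $p$ that can be built from these variables are exactly the elements of $\mathcal{M}$ with $n_i = 0$, that is, the set $\mathcal{M}_i^0$; their coefficients factor as $p c_{\underline{n}}$ just as before, giving (b). I do not expect any genuine obstacle here, since the whole proof reduces to the Multinomial Theorem together with the single coefficient identity above; the only point demanding care is the bookkeeping that correctly matches the mixed-monomial index set with $\mathcal{M}$ in the first case and with $\mathcal{M}_i^0$ in the second.
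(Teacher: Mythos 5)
Your proof is correct and is exactly the argument the paper has in mind: the paper dismisses these identities as ``obvious,'' and the intended justification is precisely the Multinomial Theorem together with the factorisation $\frac{p!}{n_1!\cdots n_{p+1}!}=p\,c_{\underline{n}}$ for mixed monomials that you spell out. No discrepancy to report.
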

\begin{proof}
These identities are obvious.
\end{proof} 

Define the following polynomials in $\mathbb{Z}_p[x_1, \cdots ,
x_{p+1} ]$ :
\begin{align}
r_i&= \sum_{ \underline{x}^{\underline{n}} \in \mathcal{M}_i^{0}}
  \left( 1-k \left( \underline{x}^{\underline{n}} \right) \right) c_{\underline{n}}
  \underline{x}^{\underline{n}} + \sum_{ \underline{x}^{\underline{n}}
    \in \mathcal{M}_i^{+}} (2-k( \underline{x}^{\underline{n}} ))
  c_{\underline{n}}  \underline{x}^{ \underline{n}} \label{dd1}
 \intertext{for $i=1, \dots ,p+1$ and}
 r_{0}   &= \sum_{ \underline{x}^{\underline{n}} \in \mathcal{M}} (k(
\underline{x}^{ \underline{n}} )-2) c_{\underline{n}}  \underline{x}^{
  \underline{n}} \mbox{.} \label{dd2} 
\end{align}

\begin{lem}\label{lem2}
\[ \sum_{j=0}^{p+1} r_j=\frac{ps^p - \sum_{j=1}^{p+1} s_j^p}{p}
\mbox{.} \]
\end{lem}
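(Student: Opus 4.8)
The plan is to verify the identity by comparing coefficients of each monomial on the two sides, both of which turn out to be $\mathbb{Z}_p$-linear combinations of the monomials in $\mathcal{M}$.

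First I would compute the right-hand side using Lemma \ref{lem1}. Multiplying the first identity by $p$ gives $ps^p = p\sum_{j=1}^{p+1} x_j^p + p^2 \sum_{\underline{x}^{\underline{n}} \in \mathcal{M}} c_{\underline{n}} \underline{x}^{\underline{n}}$, while summing the second identity over $i$ gives $\sum_{i=1}^{p+1} s_i^p = p \sum_{j=1}^{p+1} x_j^p + \sum_{i=1}^{p+1} \sum_{\underline{x}^{\underline{n}} \in \mathcal{M}_i^0} p c_{\underline{n}} \underline{x}^{\underline{n}}$; here I use that each pure power $x_j^p$ occurs in exactly $p$ of the sums $s_i^p$ (all but $s_j^p$). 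Subtracting, the pure powers cancel. For a fixed $\underline{x}^{\underline{n}} \in \mathcal{M}$ with $k = k(\underline{x}^{\underline{n}})$ variables, it lies in $\mathcal{M}_i^0$ for precisely the $p+1-k$ indices $i$ with $n_i = 0$, so its net coefficient in $ps^p - \sum_i s_i^p$ is $p^2 c_{\underline{n}} - (p+1-k)p\, c_{\underline{n}} = p(k-1) c_{\underline{n}}$. Dividing by $p$, the right-hand side equals $\sum_{\underline{x}^{\underline{n}} \in \mathcal{M}} (k-1) c_{\underline{n}} \underline{x}^{\underline{n}}$.

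Next I would compute the left-hand side by collecting, for each $\underline{x}^{\underline{n}} \in \mathcal{M}$, its total coefficient across $r_0, r_1, \dots, r_{p+1}$. From the definitions \eqref{dd1} and \eqref{dd2}, a monomial of weight $k$ contributes to $r_i$ through $\mathcal{M}_i^0$ with coefficient $(1-k)c_{\underline{n}}$ for the $p+1-k$ indices having $n_i = 0$, and through $\mathcal{M}_i^+$ with coefficient $(2-k)c_{\underline{n}}$ for the $k$ indices having $n_i > 0$; it also contributes $(k-2)c_{\underline{n}}$ to $r_0$. Summing, the coefficient is $c_{\underline{n}} \left[ (p+1-k)(1-k) + k(2-k) + (k-2) \right]$, which expands and simplifies to $c_{\underline{n}}(p-1)(1-k)$.

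Finally I would reconcile the two sides modulo $p$. Since $p-1 \equiv -1 \pmod p$, the left-hand coefficient $(p-1)(1-k) c_{\underline{n}}$ reduces to $(k-1)c_{\underline{n}}$, matching the right-hand side monomial by monomial. I do not anticipate a genuine obstacle here: the argument is pure coefficient bookkeeping, and the only points needing care are the multiplicity count—that a weight-$k$ monomial sits in exactly $p+1-k$ of the sets $\mathcal{M}_i^0$ and exactly $k$ of the sets $\mathcal{M}_i^+$—and the sign reduction $p-1 \equiv -1$ that makes the two expressions agree over $\mathbb{Z}_p$.
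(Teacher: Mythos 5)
Your proposal is correct and follows essentially the same route as the paper: collect the coefficient of each monomial in $\mathcal{M}$ on the left to get $c_{\underline{n}}\bigl((p+1-k)(1-k)+k(2-k)+(k-2)\bigr)=(1-p)(k-1)c_{\underline{n}}\equiv (k-1)c_{\underline{n}} \pmod p$, and use Lemma \ref{lem1} with the count that a weight-$k$ monomial lies in exactly $p+1-k$ of the $\mathcal{M}_i^0$ to show the right-hand side is $\sum_{\mathcal{M}}(k-1)c_{\underline{n}}\underline{x}^{\underline{n}}$ as well. This is exactly the paper's argument, just with the bookkeeping spelled out slightly more explicitly.
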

\begin{proof}

\begin{align*}
\sum_{j=0}^{p+1} r_j &=\sum_{ \underline{x}^{
    \underline{n}} \in \mathcal{M}} \bigg( (p+1-k( \underline{x}^{
  \underline{n}} ))(1-k( \underline{x}^{ \underline{n}})) +  (k(
\underline{x}^{ \underline{n}})-1)(2-k( \underline{x}^{
  \underline{n}}))\bigg) c_{\underline{n}}  \underline{x}^{
  \underline{n}} \\ &=  (1-p)\sum_{ \underline{x}^{\underline{n}}
    \in \mathcal{M} } (k(  \underline{x}^{\underline{n}}) -1)
  c_{\underline{n} }  \underline{x}^ {\underline{n}} = \sum_{ \underline{x}^{\underline{n}}
    \in \mathcal{M} } ( k ( \underline{x}^{\underline{n}} ) -1)
  c_{\underline{n} }  \underline{x}^ {\underline{n}} 
\end{align*}
{and Lemma \ref{lem1} gives}
\[ 
\frac{ps^p - \sum_{j=1}^{p+1} s_j^p}{p} = \sum_{ \underline{x}^{\underline{n}}
    \in \mathcal{M} } ( k ( \underline{x}^{\underline{n}} ) -1)
  c_{\underline{n} }  \underline{x}^ {\underline{n}}
\]
{as well.}
\end{proof}

\section*{Isomorphism}

\begin{prop}
$Cay(G,S) \cong Cay(G,T)$.
\end{prop}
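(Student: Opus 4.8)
The plan is to exhibit an explicit bijection $\phi\colon G\to G$ that is a graph isomorphism $Cay(G,S)\to Cay(G,T)$, built from the polynomials $r_0,\dots,r_{p+1}$ of \eqref{dd1} and \eqref{dd2}. Writing an element of $G$ as $(u,v)$ with $u=\sum_{i=1}^{p+1}x_i e_i\in U$ and $v\in V$, set $\underline{x}=(x_1,\dots,x_{p+1})$ and define
\[ \phi(u,v)=\Big(u,\; v+\sum_{j=0}^{p+1} r_j(\underline{x})\,f_j\Big). \]
Since $\phi$ fixes the $U$-component and acts on each fibre $\{u\}\times V$ by a translation, it is a bijection of $G$.

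The first step is a reduction. The pieces of $S$ and of $T$ sit over the $2p+3$ pairwise distinct $U$-components $e_i$, $\sum_{j\neq i}e_j$ $(1\le i\le p+1)$ and $\sum_{j=1}^{p+1}e_j$, and $\phi(g)-\phi(h)$ has the \emph{same} $U$-component $d:=u_g-u_h$ as $g-h$, while its $V$-component differs from that of $g-h$ by $\sum_{j}\Delta_{\underline{\alpha}}r_j(\underline{x}_h)\,f_j$, where $\underline{\alpha}\in\mathbb{Z}_p^{p+1}$ is the coordinate vector of $d$. For $U$-components outside the list, neither $g-h\in S$ nor $\phi(g)-\phi(h)\in T$ can hold, so there is nothing to check. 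For the remaining cases, each relevant affine subspace is a level set of a form $\langle\,\cdot\,,\ell\rangle$, and translating by a fixed vector shifts the level by the form-value of that vector; requiring this to match $S\leftrightarrow T$ for \emph{all} $g,h$ turns the statement $g-h\in S\Leftrightarrow\phi(g)-\phi(h)\in T$ into three polynomial identities over $\mathbb{Z}_p$ (for all $\underline{x}$):
\[ \Delta_{\underline{e}_i}(r_0+r_i)=0,\qquad \Delta_{\underline{\beta}}\Big(r_i+\sum_{j=0}^{p+1}r_j\Big)=0,\qquad \Delta_{\underline{1}}\Big(\sum_{j=0}^{p+1}r_j\Big)=1, \]
where $\underline{e}_i$, $\underline{\beta}=\sum_{j\neq i}\underline{e}_j$ and $\underline{1}$ are the coordinate vectors of $e_i$, $\sum_{j\neq i}e_j$ and $\sum_{j=1}^{p+1}e_j$. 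The first two come from the shared pieces $A_i,B_i$ (the condition must be preserved), and the third encodes the switch from $C_0$ to $C_1$.

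I would then prove the three identities. Matching coefficients monomial-by-monomial and invoking Lemma~\ref{lem1} collapses the combinations into the clean closed forms
\[ r_0+r_i=\frac{\sum_{j\neq i}x_j^p-s_i^p}{p},\qquad r_i+\sum_{j=0}^{p+1}r_j=\frac{s^p-s_i^p-x_i^p}{p}, \]
while Lemma~\ref{lem2} already supplies $\sum_{j=0}^{p+1}r_j=(p s^p-\sum_j s_j^p)/p$. The first identity is immediate, because $r_0+r_i$ contains no $x_i$ whereas $\underline{e}_i$ shifts only $x_i$. For the other two, note that $\underline{\beta}$ sends $s\mapsto s+p$ and $s_i\mapsto s_i+p$ while fixing $x_i$, and $\underline{1}$ sends $s\mapsto s+p+1$ and each $s_j\mapsto s_j+p$.

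The main obstacle is exactly this last step: each expression is a quotient by $p$, so one must compute the shifted numerator modulo $p^2$, then divide by $p$ and reduce modulo $p$. The enabling facts are $(a+p)^p\equiv a^p\pmod{p^2}$ for every integer polynomial $a$ (every binomial term past the leading one carries a factor $p^2$) together with the Frobenius congruence $(s+1)^p-s^p\equiv 1\pmod{p}$. For $\Delta_{\underline{\beta}}\big(r_i+\sum_j r_j\big)$ the shifted numerator is $[(s+p)^p-s^p]-[(s_i+p)^p-s_i^p]\equiv 0\pmod{p^2}$, so the quotient vanishes mod $p$; for $\Delta_{\underline{1}}\big(\sum_j r_j\big)$ one uses $(s+p+1)^p\equiv(s+1)^p\pmod{p^2}$ to get numerator $\equiv p\pmod{p^2}$, hence value $1$. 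With all three identities established, $\phi$ is the required isomorphism.
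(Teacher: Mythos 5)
Your proposal is correct and follows essentially the same route as the paper: the same map $\phi$ shifting each $f_j$-coordinate by $r_j(\underline{x})$, the same reduction to the three difference identities $\Delta_{e_i}(r_0+r_i)=0$, $\Delta_{\sum_{j\ne i}e_j}(r_i+\sum_j r_j)=0$, $\Delta_{\sum_j e_j}(\sum_j r_j)=1$, and the same closed forms via Lemmas \ref{lem1} and \ref{lem2} together with the congruence $(t+p)^p\equiv t^p\pmod{p^2}$. The only cosmetic difference is that you state the reduction to level-set identities more explicitly and record a closed form for $r_0+r_i$ where the paper simply observes that this sum involves no $x_i$.
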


\begin{proof}
Let $\phi : \mathbb{Z}_p^{2p+3} \rightarrow \mathbb{Z}_p^{2p+3}$ be
defined by
\begin{multline*}
\phi \left( x_1,\dotsc , x_{p+1} , y_0, y_1, \dotsc , y_{p+1} \right) = \\ 
= \big( x_1, \dotsc , x_{p+1},y_0 + r_0 (x_1, \dotsc , x_{p+1}) ,  \dotsc
, y_{p+1} + r_{p+1} (x_1,  \dotsc , x_{p+1}) \big) 
\end{multline*}
 where $r_i \in \mathbb{Z}_p[x_1, \dotsc , x_{p+1} ]$ are defined in
 (\ref{dd1}) and (\ref{dd2}) .

We claim that $\phi$ is an isomorphism from $Cay(G,S)$ to
$Cay(G,T)$.
Note that $\phi$ acts by translation on $\underline{u}+V$ for
every $\underline{u} \in U$ so $\phi$ is bijective.
It remains to show that for $a \mbox{,} b \in G$ if $b-a \in S$ then $\phi (b)- \phi (a) \in
T$. 

Assume first that $b-a \in A_i$ for some $1 \le i \le p+1$ and write $a=(\underline{x} ,
\underline{y} )$ with $\underline{x} \in U$ and $\underline{y} \in V$.
 Clearly $\phi$ does not affect the first $p+1$ coordinates hence we
 need to show $\phi (b)- \phi (a) \in A_i$. 
Now we have
\begin{align*} 
\big( \phi (b)- \phi (a) \big) -\big( b-a\big) &=\big( \phi (b)-b
\big) - \big(
\phi (a)-a \big) = \\ &=(0, \dotsc ,0,\Delta_{e_i}
r_0(\underline{x}),\Delta_{e_i} r_1 (\underline{x}), \dotsc
,\Delta_{e_i} r_{p+1} (\underline{x})) \mbox{.} 
\end{align*}
\\Thus we have to check that $ \left< ( \Delta_{e_i}
r_0(\underline{x} ),\Delta_{e_i} r_1 (\underline{x}), \dotsc
,\Delta_{e_i} r_{p+1} (\underline{x})),f_0+f_i \right> =0$.
Now
\begin{align*}
\left< ( \Delta_{e_i}
r_0(\underline{x}),\Delta_{e_i} r_1 (\underline{x}), \dotsc
,\Delta_{e_i} r_{p+1} (\underline{x}) ) ,f_0+f_i \right> &=\Delta_{e_i} r_0
(\underline{x})+\Delta_{e_i} r_i (\underline{x} ) = \\ &=\Delta_{e_i}
(r_0 (\underline{x}) +r_i (\underline{x}))=0
\end{align*}       
since $r_0 +r_i$ does not involve $x_i$.

By the same argument if $b-a \in C_0$ then using Lemma \ref{lem2} we
get 
\begin{align*}
&\Delta_{\sum_{j=1}^{p+1} e_j } \left( \sum_{j=0}^{p+1} r_j \right)
=\frac{p(s+p+1)^p -
\sum_{j=1}^{p+1} (s_j+p)^p}{p} - \frac{ps^p - \sum_{j=1}^{p+1}
s_j^p}{p} \\ &= (s+1)^p-s^p=1 \mbox{.}
\end{align*}
These equations hold over $\mathbb{Z}_p$ since $(t +p)^p \equiv
t^p \pmod{p^2}$. Hence if $b-a \in C_0$, then $\phi (b) - \phi (a) \in
C_1$.

Finally, if $b-a \in B_i$ we need a little more computation.
The proof of Lemma \ref{lem2} shows that 
\[ \sum_{j=0}^{p+1} r_j= \sum_{ \underline{x}^{\underline{n}}
    \in \mathcal{M} } ( k ( \underline{x}^{\underline{n}} ) -1)
  c_{\underline{n} }  \underline{x}^ {\underline{n}} \mbox{.} \]
Hence
\begin{multline*}
r_i + \sum_{j=0}^{p+1} r_j =\sum_{ \underline{x}^{\underline{n}} \in \mathcal{M}_i^{0}}
  (1-k( \underline{x}^{\underline{n}} )) c_{\underline{n}}
  \underline{x}^{\underline{n}} + \sum_{ \underline{x}^{\underline{n}}
    \in \mathcal{M}_i^{+} } (2-k( \underline{x}^{\underline{n}} )) c_{\underline{n}}
  \underline{x}^{\underline{n}} \\
+  \sum_{ \underline{x}^{\underline{n}}
\in \mathcal{M} } ( k(  \underline{x}^{\underline{n}} ) -1 )
  c_{\underline{n} }  \underline{x}^ {\underline{n}}  =  \sum_{
    \underline{x}^{\underline{n}} \in \mathcal{M}_i^{+}} c_{\underline{n} }
  \underline{x}^{\underline{n} } = \frac{s^p-x_i^p-s_i^p}{p} \mbox{.}
\end{multline*}
Therefore
\[ \Delta_{\sum_{j \ne i} e_j} \left( r_i + \sum_{j=0}^{p+1} r_j
\right) =
\frac{(s+p)^p- x_i^p - (s_i+p)^p}{p} -\frac{s^p- x_i^p - s_i^p}{p} =0 \]
using again the fact that $(t+p)^p \equiv t^p \pmod{p^2}$.
Hence if $b-a \in B_i$, then $\phi (b) - \phi (a) \in B_i$ and this
finishes the proof of the fact that $\phi$ is indeed a graph isomorphism.
\end{proof}

\section*{Checking the CI property}

Now in order to show that $Cay(G,S)$ is not a CI graph we have to show
that there is no $\sigma \in Aut(G)=GL(U \bigoplus V)$ such that
$\sigma (S) =T$.
\begin{prop}\label{prop2}
There is no linear transformation $\sigma \in GL(U \bigoplus V)$ such
that $\sigma (S)=T$.
\end{prop}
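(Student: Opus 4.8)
The plan is to assume such a $\sigma$ exists and force a contradiction from the single feature distinguishing $S$ from $T$: the piece $C_0$ meets the subspace $U$ (it contains the point $\sum_j e_j+0$), whereas $C_1$ does not.

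\textbf{Reduction to a permutation of the affine pieces.} Since $\lvert S\rvert=(2p+3)p^{p+1}$ is $2p+3$ times the common size $p^{p+1}$ of the subspaces $A_i,B_i,C_0$, these $2p+3$ pieces are pairwise disjoint; their images under $\pi\colon G\to U$ are the distinct points $P=\{e_i\}_i\cup\{\mathbf 1_U-e_i\}_i\cup\{\mathbf 1_U\}$, where $\mathbf 1_U=\sum_{j=1}^{p+1}e_j$. Every coordinate of every point of $P$ lies in $\{0,1\}$, while any affine line of $U$ takes $p>2$ values in some coordinate, so $P$ contains no affine line; hence any affine subspace of $G$ inside $S$ projects to a single point of $P$ and lies in one piece. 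Thus the pieces are exactly the maximal affine subspaces of $S$ (and of $T$), so $\sigma$ induces a bijection between the pieces of $S$ and those of $T$. The direction spaces of the pieces lie in $V$ and the directions of the $A_i$ already span $V$; as $\sigma$ carries each direction to the direction of its image piece, we get $\sigma(V)=V$. Writing $\sigma(u)=\bar\sigma(u)+\beta(u)$ for $u\in U$ with $\bar\sigma(u)\in U$, $\beta(u)\in V$, and $\gamma:=\sigma|_V\in GL(V)$, the map $\bar\sigma\in GL(U)$ permutes $P$ and $\gamma$ permutes the $2p+3$ direction hyperplanes of $V$.

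\textbf{Forcing $\sigma(C_0)=C_1$.} Summing the points of $P$ gives $\sum_i e_i+\sum_i(\mathbf 1_U-e_i)+\mathbf 1_U=\mathbf 1_U+0+\mathbf 1_U=2\,\mathbf 1_U$, using $(p+1)\mathbf 1_U=\mathbf 1_U$. Since $\bar\sigma$ permutes $P$ it fixes this sum, and as $2$ is invertible modulo $p$ we obtain $\bar\sigma(\mathbf 1_U)=\mathbf 1_U$. But $C_0$ is the unique piece of $S$ with $U$-projection $\mathbf 1_U$ and $C_1$ the unique such piece of $T$, so $\sigma(C_0)=C_1$. Writing $W_0=\{v:\langle v,\mathbf 1_V\rangle=0\}$ with $\mathbf 1_V=\sum_{j=0}^{p+1}f_j$, and comparing $\sigma(\mathbf 1_U+W_0)=\mathbf 1_U+\beta(\mathbf 1_U)+\gamma(W_0)$ with $C_1=\mathbf 1_U+\{v:\langle v,\mathbf 1_V\rangle=1\}$, this forces $\gamma(W_0)=W_0$ together with the offset equation $\langle\beta(\mathbf 1_U),\mathbf 1_V\rangle=1$.

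\textbf{The summation contradiction.} Every shared piece $Q=u_Q+D_Q$ (with $D_Q\subseteq V$ a subspace through $0$) is sent by $\sigma$ to a shared piece $Q'=u_{Q'}+D_{Q'}$ of the same homogeneous type, and matching affine parts gives $\beta(u_Q)\in D_{Q'}=\gamma(D_Q)$. Since $\sum_i e_i=\mathbf 1_U$ and $\sum_i(\mathbf 1_U-e_i)=0$, we have $\mathbf 1_U=\sum_Q u_Q$ over all shared $Q$, whence $\langle\beta(\mathbf 1_U),\mathbf 1_V\rangle=\sum_Q\langle\beta(u_Q),\mathbf 1_V\rangle$. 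The aim is to show these membership relations, summed against the normals and combined with the incidence identities of the configuration — in particular $\sum_i(f_0+f_i)=\mathbf 1_V$ (the normals of the $A_i$ add to the normal of $C$) and the constancy of $(f_0+f_i)-(\mathbf 1_V+f_i)=f_0-\mathbf 1_V$ — collapse to $\langle\beta(\mathbf 1_U),\mathbf 1_V\rangle=0$, contradicting the offset equation. This collapse I have verified when $\sigma$ fixes every piece: writing $\beta(e_i)=\sum_{k=0}^{p+1}\beta_{ik}f_k$, the relation $\beta(e_i)\in D_{A_i}$ reads $\beta_{i0}+\beta_{ii}=0$, so summing the $A_i$-relations gives $\sum_i(\beta_{i0}+\beta_{ii})=0$, while summing the $B_i$-relations $\langle\beta(\mathbf 1_U-e_i),\mathbf 1_V+f_i\rangle=0$ gives exactly $\langle\beta(\mathbf 1_U),\mathbf 1_V\rangle=\sum_i(\beta_{i0}+\beta_{ii})$; the two together yield $\langle\beta(\mathbf 1_U),\mathbf 1_V\rangle=0$.

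\textbf{The main obstacle.} The hard part is the last step for an arbitrary induced permutation: I must control how $\gamma$ permutes the direction hyperplanes and how $\bar\sigma$ permutes $P$. Here the rigidity of the normal configuration $\{f_0+f_i\}\cup\{\mathbf 1_V+f_i\}\cup\{\mathbf 1_V\}$ should be decisive — $\gamma(W_0)=W_0$ already fixes the line $\langle\mathbf 1_V\rangle$, the linear relations above separate the family $\{f_0+f_i\}$ from $\{\mathbf 1_V+f_i\}$, and $\bar\sigma(\mathbf 1_U)=\mathbf 1_U$ restricts $\bar\sigma$ to a signed permutation of the $e_i$. Once the permutation is pinned down sufficiently, the same summation of the homogeneity relations $\beta(u_Q)\in\gamma(D_Q)$ should again force $\langle\beta(\mathbf 1_U),\mathbf 1_V\rangle=0$ and close the argument.
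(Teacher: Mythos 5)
Your overall strategy is the paper's: show $V$ is $\sigma$-invariant, deduce that $\sigma$ permutes the $2p+3$ affine pieces and induces a permutation of the projections in $U$, pin that permutation down, and then derive a contradiction by summing the linear conditions $\langle M_{2,1}u_Q,\,n_{Q'}\rangle=0$ against the $C_1$-condition $\langle M_{2,1}\sum e_i,\sum f_j\rangle=1$. Your first two steps are sound, and your route to $\bar\sigma(\mathbf 1_U)=\mathbf 1_U$ (summing the $2p+3$ points of $P$ to get $2\mathbf 1_U$) is a clean alternative to the paper's observation that $\sum e_j$ is the unique element of $\hat S$ that is a sum of two others. Your verification of the final summation in the identity case is exactly the paper's closing computation.

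However, there is a genuine gap, and it sits exactly where you flag it: you never rule out that $\bar\sigma$ sends some $e_i$ to some $\sum_{j\ne k}e_j$, i.e.\ that $\sigma$ maps an $A$-piece onto a $B$-piece. Your phrase ``restricts $\bar\sigma$ to a signed permutation of the $e_i$'' is not justified and is not even the right statement ($-e_i\notin P$; the actual constraint is that $\bar\sigma$ permutes $\{e_i\}\cup\{\mathbf 1_U-e_i\}$ compatibly with the pairing $x\mapsto \mathbf 1_U-x$). Without excluding the $A\to B$ case the summation does not close, because the normal attached to the image piece changes type ($f_0+f_j$ versus $f_j+\sum f_k$). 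The paper closes this hole with its Lemma~4: since $\bar\sigma(e_i)+\bar\sigma(\mathbf 1_U-e_i)=\mathbf 1_U$, the set $H=\{\bar\sigma(e_i)\}$ contains exactly one element of each pair and sums to $\mathbf 1_U$; pairing with $\mathbf 1_U$ under the standard form (each $e_j$ gives $1$, each $\mathbf 1_U-e_j$ gives $p\equiv 0$, and $\langle\mathbf 1_U,\mathbf 1_U\rangle=p+1\equiv 1$) forces the number of $e$-type elements of $H$ to be $\equiv 1\pmod p$, and the case of exactly one such element is killed by computing $\sum_{h\in H}h=2e_{j_0}-\mathbf 1_U\ne\mathbf 1_U$. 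Hence $M_{1,1}$ is a permutation matrix; after composing with the graph automorphism that permutes the $e_i$ and the $f_i$ simultaneously (fixing $f_0$) one may take $M_{1,1}=I$, and your identity-case computation finishes the proof. (In fact, once $A_i\mapsto A_{\pi(i)}$ and $B_i\mapsto B_{\pi(i)}$ with a common $\pi$ is known, your summation goes through for any $\pi$ since $\sum_i f_{\pi(i)}=\sum_i f_i$; so the missing ingredient is precisely Lemma~4, not the normalization.) As written, your argument is an incomplete proof.
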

\begin{proof}
Assume by way of contradiction that $\sigma \in GL(U \bigoplus V )$ with
$\sigma (S)=T$. Let $M$  denote the matrix of the linear
transformation $\sigma$ with respect to the  the basis
$\{ e_1, \ldots ,e_{p+1}, f_0, f_1, \ldots ,f_{p+1} \}$ and  write
$M=  \left[ \begin{array}{cc} M_{1,1} & M_{1,2} \\
  M_{2,1} & M_{2,2} \end{array} \right] $ as a block matrix, where
$M_{1,1} \in \mathbb{Z}_p^{(p+1) \times (p+1)} $ and $M_{2,2} \in
\mathbb{Z}_p^{(p+2) \times (p+2)} $ \text{.} 

For the purpose of the following we modify our notation as
follows. Let $S= \mathop{\cup}\limits_{i=1}^{2p+3} S_i$
and $T= \mathop{\cup}\limits_{i=1}^{2p+3} T_i$, where $S_i =T_i =A_i$,
$S_{i+p+1} =T_{i+p+1} =B_i$ for $i=1, \ldots ,p+1$ and $S_{2p+3}=C_0$ ,
$T_{2p+3}=C_1$. 
\begin{lem}\label{lem3}
V is an invariant subspace of $\sigma$, i.e., $M_{1,2}=0$.
\end{lem}

\begin{proof}
Considering only the first $p+1$ coordinates it is easy to see that for
$i \ne j$ if $a \in S_i$ and $b \in S_j$, then $2a-b
\not\in S $ and similarly for $T$ and thus both $S$ and $T$ contain
exactly $2p+3$ affine subspaces of dimension $p+1$. Hence for $1 \le i
\le 2p+3$ we must have $\sigma (S_i) \subset T_j$ for some $j$ and if $a$, $b$
$\in S_i$ then $\sigma (a)- \sigma (b) \in V$. Now $$Span \bigg( \,
\mathop{\cup}\limits_{i=1}^{p+1} \{ a-b \mid  a, b
  \in S_i \} \, \bigg)  =V$$ so $\sigma(V) \subseteq V$. 
\end{proof}
It is immediate from the preceding lemma that $\sigma$ is a linear
transformation of $(U \bigoplus V)/V$ and for $\hat{S} = \left\{ \,
  e_i, \sum_{j \ne i} e_j \mid 1 \le i \le p+1 \, \right\} \cup
\left\{ \, \sum_{j=1}^{p+1} e_j \, \right\} \subset U$ we have
$\sigma(\hat{S})= \hat{S}$.

\begin{lem}\label{lem4}
$M_{1,1}$ is a permutation matrix.
\end{lem}

\begin{proof}
Let $e:=\sum_{j = i}^{p+1} e_j$.  Note that $e$ is the unique element
of $\hat{S}$ which is the sum of two others within $\hat{S} $ hence $\sigma (e) =
e$. The rest of the points can be paired such that the sum of every
pair is $e$ and by the linearity of $\sigma $ the set $H= \{ \,\sigma (e_i)
\mid 1 \le i \le p+1 \, \}$ contains exactly one element of each
pair. Furthermore, $\sum_{h \in H} h = \sum_{i=1}^{p+1} \sigma (e_i) = \sigma
(\sum_{i=1}^{p+1} e_i ) = \sigma (e)=e $. 

For every $s \in \hat{S}$ $ \left< s,e  \right> = 0 \mbox{
  or  } 1$ hence if $H$ contains an element $x$ such that $\left< x,e
\right> =0$ then $H$ contains $p$ elements with the same property as
$\left< \sum_{h \in H} h, e \right> = \left< e,e \right> =1$. By
permuting the coordinates we obtain that if H contains an element $h$
such that $\left< h, e \right> =0$ then $H=\{ \, e_1 \, \} \cup \{ \,
\sum_{j \ne i } e_j \mid 2 \le i \le p+1 \, \}$ but then $\sum_{h \in
  H} h = e_1 -e_2 - \dotsc - e_{p+1} \ne \sum_{j=1}^{p+1} e_j =e $.
\end{proof}

For every permutation of $\{ e_1, \dotsc ,e_{p+1} \}$ if we apply
the same permutation to the indices of $\{ f_1, \dotsc ,f_{p+1} \}$ and fix $f_0$
we obtain an automorphism of $Cay(G,S)$. Hence we may assume for the
rest of the proof that $M_{1,1}=I$. This assumption implies that
$\sigma (e_i ) \in A_i$ and $\sigma (\sum_{j \ne i} e_j ) \in B_i$ for
$1 \le i \le p+1$.
From this we get 
\begin{align*}
\left< M_{2,1} e_i ,f_0 +f_i \right> &=0  \mbox{,}  \\ \left<
  M_{2,1} \sum_{j \ne i} e_j ,f_i + \sum_{j=0}^{p+1} f_j \right>
&=0 
\end{align*}
for $1 \le i \le p+1$.
\\ The sum of these equations over $\mathbb{Z}_p$ is \[ \left< M_{2,1}
\sum_{i=1}^{p+1} e_i , \sum_{j=0}^{p+1} f_j \right> =0 \mbox{ .}\]
We also have that $\sigma ( \sum_{j=1}^{p+1} e_j ) \in C_1$ which
gives the following condition: 
\[ \left< M_{2,1} \sum_{i=1}^{p+1} e_i , \sum_{j=0}^{p+1} f_j \right>
=1 \]
and this contradicts what we have just found above, finishing the
proof of Proposition \ref{prop2}.
\end{proof}

\section*{Undirected graphs}

It is clear that a Cayley graph $Cay(G,S)$ is undirected if and only
if $S=S^{-1}$, where $S^{-1} = \left\{ \, s^{-1} \in G \mid s \in S \,
  \right\} $. If $G$ is  an Abelian group we write $-S=\left\{ \, -s
    \in G \mid s \in G \, \right\}$ instead of $S^{-1}$. For a subset $S$ of $G$ we define
  $\bar{S} = S \cup -S$. It is also clear that if $\phi$ is an
  isomorphism between $Cay(G,S)$ and $Cay(G,T)$ then  $\phi$ is an
  isomorphism between $Cay(G,\bar{S} )$ and $Cay(G,\bar{T} )$ as well.

In the previous sections we found two isomorphic directed Cayley graphs
$Cay(\mathbb{Z}_p^{2p+3}, S)$ and $Cay(\mathbb{Z}_p^{2p+3}, T)$ of
$\mathbb{Z}_p^{2p+3}$, where $S$ and $T$ are defined in (\ref{set}).
Therefore we have a pair of isomorphic undirected Cayley graphs:
$Cay(\mathbb{Z}_p^{2p+3}, \bar{S} )$ and $Cay(\mathbb{Z}_p^{2p+3},
\bar{T} )$.

\begin{Thm}\label{thm2}
For every prime  $p>3$  the group $\mathbb{Z}_p^{2p+3}$ has an
undirected Cayley graph which is not a CI graph.
\end{Thm}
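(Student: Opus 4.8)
The plan is to run the argument of Proposition~\ref{prop2} again, now taking advantage of two families of automorphisms of $Cay(G,\bar S)$: the coordinate permutations used in the directed case, and the inversion $x \mapsto -x$, which fixes $\bar S$ because $\bar S = -\bar S$ by construction. The isomorphism $Cay(G,\bar S) \cong Cay(G,\bar T)$ is already recorded at the start of this section, so it remains only to exclude a $\sigma \in GL(U\oplus V)$ with $\sigma(\bar S)=\bar T$. I would assume such a $\sigma$ exists and derive a contradiction. Throughout write $s_i = \sum_{j\ne i} e_j = e - e_i$ and $e = \sum_{j=1}^{p+1} e_j$, and let $\hat{\bar S} = \{\pm e_i, \pm s_i, \pm e : 1\le i\le p+1\}$ denote the set of $U$-projections of the affine subspaces composing $\bar S$; a short check shows these $4p+6$ vectors are pairwise distinct for $p>2$.

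The hard part, and the only place where $p>3$ is genuinely needed, is re-establishing the analogue of Lemma~\ref{lem3}, namely $\sigma(V)=V$. I claim the only $(p+1)$-dimensional affine subspaces contained in $\bar S$ are $\pm A_i,\pm B_i,\pm C_0$. Indeed, if $M\subseteq \bar S$ is such a subspace then its projection to $U$ is contained in $\hat{\bar S}$; were this projection positive-dimensional it would contain an affine line, i.e.\ $p$ collinear points of $\hat{\bar S}$. But every coordinate of every vector in $\hat{\bar S}$ lies in $\{0,1,-1\}$, while a non-constant affine map $\mathbb{Z}_p \to \mathbb{Z}_p$ is surjective; since $|\{0,1,-1\}| = 3 < p$ when $p>3$, no line fits inside $\hat{\bar S}$. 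Hence the projection of $M$ is a single point and $M$ is one of the listed fibres. (At $p=3$ this breaks down: for instance $e_i + s_i + (-e) = 0$, so $\{e_i, s_i, -e\}$ is a line contained in $\hat{\bar S}$, which is exactly why the undirected statement is restricted to $p>3$.) Consequently $\sigma$ permutes these $4p+6$ subspaces, sending direction spaces to direction spaces; as the directions span $V$, I conclude $\sigma(V)=V$.

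Next I would study the induced linear map on $(U\oplus V)/V \cong U$, which must preserve $\hat{\bar S}=\hat{\bar T}$. Counting the representations of an element as a sum of two distinct members of $\hat{\bar S}$ singles out $e$ and $-e$: each admits the $p+1$ representations $e = e_i + s_i$, whereas every other element of $\hat{\bar S}$ admits at most one. Hence $\sigma$ preserves $\{e,-e\}$, and after composing with $x\mapsto -x$ if necessary I may assume $\sigma(e)=e$. Then $\sigma$ permutes the pairs $\{e_i,s_i\}$ summing to $e$, so each $\sigma(e_i)$ is some $e_{\pi(i)}$ or $s_{\pi(i)}$. Using $\langle e_i,e\rangle = 1$, $\langle s_i,e\rangle = 0$ and $\sum_i \sigma(e_i) = \sigma(e)=e$ exactly as in Lemma~\ref{lem4} (and $p\ne 2$), the number of indices with $\sigma(e_i)=e_{\pi(i)}$ is $\equiv 1 \pmod p$, hence equals $1$ or $p+1$; the value $1$ would force $2e_{j_0} = 2e$, i.e.\ $e_{j_0}=e$, which is false, so all $\sigma(e_i)$ are of the first type and $M_{1,1}$ is a permutation matrix. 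Composing with the corresponding coordinate permutation, which fixes $e$ and preserves $\bar S$, I may finally assume $M_{1,1}=I$.

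At this point the proof closes as in Proposition~\ref{prop2}. With $M_{1,1}=I$ and $\sigma(e)=e$, the subspace $A_i$ (resp.\ $B_i$, $C_1$) is the unique member of $\bar T$ with $U$-projection $e_i$ (resp.\ $s_i$, $e$), so $\sigma(e_i)\in A_i$, $\sigma(s_i)\in B_i$ and $\sigma(e)\in C_1$. These give $\langle M_{2,1}e_i, f_0+f_i\rangle = 0$ and $\langle M_{2,1}s_i, f_i+\sum_{j=0}^{p+1} f_j\rangle = 0$, whose sum over $i$ yields $\langle M_{2,1}e, \sum_{j=0}^{p+1} f_j\rangle = 0$, while $\sigma(e)\in C_1$ forces the same quantity to equal $1$. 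This contradiction proves that no such $\sigma$ exists. To summarise, every step after the invariance of $V$ is a routine adaptation of the directed argument together with the inversion symmetry; the genuinely new and delicate point is the exclusion of accidental affine subspaces, which rests on the observation that the coordinates of $\hat{\bar S}$ take only the three values $0,1,-1$ and that this is too few to contain a line once $p>3$.
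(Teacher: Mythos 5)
Your proof is correct and follows essentially the same route as the paper's: establish $\sigma(V)=V$, single out $\pm e$ by counting representations of elements of $\hat{\bar S}$ as sums of two distinct members of $\hat{\bar S}$ (the paper phrases this as vertex degrees in the subgraph induced on the neighbourhood of $0$ in $Cay(U,\tilde{S})$), normalise by $x\mapsto -x$ and a coordinate permutation, and close with the computation of Proposition~\ref{prop2}. The only step where you genuinely diverge is the $V$-invariance, which the paper leaves to the reader beyond the remark that $p>3$ is needed: your argument that no affine line can lie inside $\hat{\bar S}$ because its coordinates take only the values $0,\pm 1$ is a correct and clean substitute for adapting the $2a-b$ trick of Lemma~\ref{lem3}, and your observation that $\{e_i,s_i,-e\}$ is a line when $p=3$ correctly locates the obstruction.
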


\begin{pf}
It is enough to show that there is no linear transformation $\sigma$
such that $\sigma (\bar{S}) = \bar{T}$.  Let us assume that $\sigma
\in GL(U \bigoplus V )$ with $\sigma (\bar{S} ) = \bar{T}$. 

The same kind of reasoning as in the previous section shows that $V$
is an invariant subspace of $\sigma$, but here we have to use the
extra condition that $p > 3$. We also get a subset \[ \tilde{S}=
\left\{ \, e_i, -e_i, \sum_{j \ne i} e_j, - \sum_{j \ne i} e_j \bigg|
  1 \le i \le p+1 \, \right\} \cup \left\{ \, \sum_{j=1}^{p+1} e_j,
  -\sum_{j=1}^{p+1} e_j \, \right\} \] of $U$ such that $\sigma
(\tilde{S} ) =\tilde{S}$. Note that we can write $\tilde{S} = \hat{S} \cup
-\hat{S}$ with $\hat{S} \cap -\hat{S} =\emptyset $.
\end{pf}

\begin{lem}
One of the two linear transformations $\sigma$ and $- \sigma$
permutes the elements of $\hat{S}$.
\end{lem}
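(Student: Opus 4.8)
The plan is to isolate the pair $\{e,-e\}$ inside $\tilde{S}$ by a count that is invariant under negation, to normalise $\sigma$ so that it fixes $e$, and then to read off the permutation action from the pairs that add up to $e$. Throughout put $e=\sum_{j=1}^{p+1}e_j$ and regard $\sigma$ as the induced invertible map on $U\cong(U\bigoplus V)/V$, so that $\sigma(\tilde{S})=\tilde{S}$. Since $\tilde{S}=\hat{S}\cup(-\hat{S})$ with $\hat{S}\cap(-\hat{S})=\emptyset$, the set $\tilde{S}$ is symmetric, $-\tilde{S}=\tilde{S}$; hence $-\sigma$ is another element of $GL(U)$ with $(-\sigma)(\tilde{S})=-\tilde{S}=\tilde{S}$. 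Thus it is legitimate to replace $\sigma$ by $-\sigma$ at any point, and proving that one of them maps $\hat{S}$ onto $\hat{S}$ is exactly the assertion of the lemma.

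For $x\in\tilde{S}$ let $R(x)$ be the number of unordered pairs $\{a,b\}\subseteq\tilde{S}$ with $a\neq b$ and $a+b=x$. Because $\sigma$ is a linear bijection of $\tilde{S}$, it carries each such representation of $x$ to a representation of $\sigma(x)$, and $\sigma^{-1}$ does the reverse, so $R(\sigma(x))=R(x)$ for every $x$; the same holds for $-\sigma$. The first claim I would establish is that $R(e)=R(-e)=p+1$ while $R(x)=1$ for every other element of $\tilde{S}$. Moreover the $p+1$ pairs realising $R(e)$ are exactly $\{e_i,\,e-e_i\}$ for $i=1,\dots,p+1$, all of which lie in $\hat{S}$, and symmetrically the pairs realising $R(-e)$ lie in $-\hat{S}$.

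To prove this claim I would use the coordinate-sum functional $\ell(x)=\sum_{i=1}^{p+1}x_i$ on $U$. On $\tilde{S}$ one has $\ell\in\{0,1,-1\}$: indeed $\ell(\pm e_i)=\pm1$, $\ell(\pm(e-e_i))=\pm p\equiv0$ and $\ell(\pm e)=\pm(p+1)\equiv\pm1$. Here is the one place the hypothesis $p>3$ is needed: if $a+b=x$ with $a,b,x\in\tilde{S}$ then $\ell(a)+\ell(b)=\ell(x)$ in $\mathbb{Z}_p$, and for $p>3$ the only way two elements of $\{0,1,-1\}$ can sum to $1$ (resp.\ $-1$, resp.\ $0$) is as $\{0,1\}$ (resp.\ $\{0,-1\}$, resp.\ $\{0,0\}$ or $\{1,-1\}$) --- for $p=3$ the spurious relation $(-1)+(-1)=1$ would also occur and destroy the count. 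Having restricted the possible $\ell$-types of $a$ and $b$, each remaining case reduces to an equality of two vectors in $\{-1,0,1\}^{p+1}$, and a comparison of their supports (the vectors $\pm e_i$ have support $1$, the vectors $\pm(e-e_i)$ have support $p$, and $\pm e$ have support $p+1$, while $p\ge5$) eliminates all possibilities except the ones listed above. This bookkeeping is elementary and is the only computational step; I expect it to be the main, though routine, obstacle, since one must run through every $\ell$-type combination for $x=e$, $x=e_k$ and $x=e-e_k$ and then invoke the symmetry $R(-x)=R(x)$.

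With the claim in hand the conclusion is immediate. Since $p>3$ gives $p+1>1$, the elements $e$ and $-e$ are precisely the two members of $\tilde{S}$ with $R(x)>1$, so $\sigma\{e,-e\}=\{e,-e\}$. Replacing $\sigma$ by $-\sigma$ if necessary --- which is permitted by the first paragraph --- we may assume $\sigma(e)=e$. Then $\sigma$ maps each pair $\{a,b\}$ with $a+b=e$ to a pair summing to $\sigma(e)=e$, hence permutes the $p+1$ pairs $\{e_i,\,e-e_i\}$ among themselves; in particular $\sigma$ maps their union $\hat{S}\setminus\{e\}=\{e_i,\,e-e_i:1\le i\le p+1\}$ onto itself. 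Together with $\sigma(e)=e$ this shows $\sigma(\hat{S})=\hat{S}$, i.e.\ $\sigma$ permutes the elements of $\hat{S}$, which is what we wanted.
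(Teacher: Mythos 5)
Your proof is correct and follows essentially the same route as the paper: your count $R(x)$ of two-element representations of $x$ within $\tilde{S}$ is exactly half the degree of $x$ in the induced subgraph on the neighbourhood of $0$ in $Cay(U,\tilde{S})$, which is the invariant the paper uses to force $\sigma(\{e,-e\})=\{e,-e\}$. The paper then finishes in the same way, normalising so that $\sigma(e)=e$ and observing that the neighbourhood of $e$ inside $\tilde{S}$ is $\hat{S}\setminus\{e\}$; your version merely writes out explicitly (via the functional $\ell$ and the comparison of supports) the verification of the count that the paper leaves implicit.
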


\begin{pf}
Since $\sigma$ induces an automorphism of $Cay(U,\tilde{S})$ and
$\sigma(0)=0$, it gives an automorphism of the induced subgraph on
the neighborhood of $0$ as well. In this subgraph the vertices $e$ and
$-e$ have degree $2p+2$, the other vertices have degree 2. This
implies that $\sigma(e)=e$ or $\sigma(e)=-e$. So either $\sigma$ or
$-\sigma$ fixes $e$. The neighborhood of $e$ in $\tilde{S}$ is
$\hat{S}$, hence the proof of Lemma~\ref{lem4} yields the result. \qed
\end{pf}

As a consequence of the previous lemma we get a linear transformation
($\sigma$ or $-\sigma$) which maps $S$ onto $T$ but this contradicts
Proposition \ref{prop2}, finishing the proof of Theorem
\ref{thm2}. \qed


\section*{Connection to previous results}
In this section we modify our construction a little bit to get
a non CI graph of the groups $ \mathbb{Z}_p^{4p-2}$ and  $
\mathbb{Z}_p^{2p-1+ \binom{2p-1}{p}}$. These results provide a uniform
explanation for the recent work of P. Spiga \cite{6} and M. Muzychuk
\cite{4}, respectively.

\subsection*{Rank $4p-2$ }
Let $U' \cong V' \cong \mathbb{Z}_p^{2p-1}$ and $W'=U' \bigoplus V'$
with the bases $\{ e_1', \dotsc ,e_{2p-1}' \}$ and $\{ f_1', \dotsc
,f_{2p-1}' \}$, respectively. We denote by $\mathcal{L}$ the set of
multilinear monomials of degree $p$ in $2p-1$ variables. Let
$\mathcal{L}_i^{0} =\left\{ \, \underline{x}^{\underline{n}} \in
  \mathcal{L} \mid n_i=0 \, \right\}$ and $\mathcal{L}_i^{+} =
\mathcal{L} \setminus \mathcal{L}_i^{0} $. If
$\underline{x}^{\underline{n}} \in \mathcal{L}$ then the exponent
vector $\underline{n}$
can be treated as a $p$-element subset of $\{ \,1, \ldots , 2p-1 \, \}$.
\\Let
\begin{align*}
A_i' &=e_i' + \left\{ \, v' \in V' \mid \left< v' ,f_i'
\right> =0 \, \right\} \mbox{,}  \\
B_i' &=  \sum_{j \ne i} e_j' + \left\{ \, v' \in V' \Bigg|
\left< v' ,f_i' + \sum_{j=1}^{2p-1} f_j' \right>  =0 \, \right\}
\mbox{,} \\ C_0' &=
\sum_{j=1}^{2p-1} e_j' + \left\{ \, v' \in V' \Bigg|
\left< v' ,\sum_{j=1}^{2p-1} f_j' \right> =0 \, \right\}
\mbox{,}  \\ C_1' &=
\sum_{j=1}^{2p-1} e_j' + \left\{ \, v' \in V' \Bigg|
\left< v' ,\sum_{j=1}^{2p-1} f_j' \right>  = -1 \, \right\}
\mbox{.}
\end{align*}
Similarly to the original construction let $S'=\bigcup_{i=1}^{2p-1} ( A_i'
\cup B_i' )  \cup C_0'$ and $T'=\bigcup_{i=1}^{2p-1} (
 A_i' \cup B_i' )
\cup C_1'$.  We claim that $Cay(W',S') \cong Cay(W',T')$ and the
isomorphism is given in the same manner:
\begin{multline*}
\phi ' (x_1,\dotsc , x_{2p-1} , y_1, \dotsc ,y_{2p-1})= \\ 
= \big( x_1, \dotsc , x_{2p-1} , y_1 + l_1 (x_1, \dotsc , x_{2p-1}) ,
\dotsc , y_{2p-1} + l_{2p-1} (x_1,  \ldots , x_{2p-1}) \big) 
\end{multline*}
where $l_i$ denotes the sum of the monomials in $\mathcal{L}_i^{0} $
for $i=1, \dotsc ,2p-1$.
\\In this case the computations needed to show that $\phi '$ is an
isomorphism of the two Cayley graphs are easier.
\begin{lem}\label{lem5}
Assume that $\underline{x}^{\underline{n}} \in \mathcal{L} $ and
$\underline{m} \in \{\, 0,1 \, \}^{2p-1} \subseteq U'$

\begin{enumerate}
\item \label{a1} 
\[ ( \Delta_{\underline{m}} \underline{x}^{\underline{n}} ) (\underline{x})
= \underline{x}^{\underline{n} \setminus \underline{m} } \sum_{\underline{k} \mspace{2mu} \subsetneqq \mspace{2mu}
  \underline{n} \cap \underline{m} } \underline{x}^{\underline{k}}
\mbox{.} \]
\item \label{a2}
\[ (\Delta_{\underline{1}} \underline{x}^{\underline{n}} )(\underline{x})
= \sum_{\underline{k} \mspace{2mu} \subsetneqq \mspace{2mu} \underline{n}}
 \underline{x}^{\underline{k} } \mbox{.} \]

\end{enumerate}

\end{lem}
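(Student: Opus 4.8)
The plan is to establish part~\ref{a1} by a direct coordinatewise expansion of the shifted monomial, and then to obtain part~\ref{a2} as the special case $\underline{m}=\underline{1}$. Since every monomial in $\mathcal{L}$ is multilinear, I would first write $\underline{x}^{\underline{n}} = \prod_{i \in \underline{n}} x_i$, viewing $\underline{n}$ as a $p$-element subset of $\{1,\dotsc,2p-1\}$. The crucial structural feature is that the translation vector $\underline{m}$ has entries in $\{0,1\}$, so translating by $\underline{m}$ replaces $x_i$ by $x_i$ when $i \notin \underline{m}$ and by $x_i+1$ when $i \in \underline{m}$. This lets me split the index set $\underline{n}$ into the part $\underline{n}\setminus\underline{m}$, whose variables are untouched, and the part $\underline{n}\cap\underline{m}$, whose variables are each shifted by exactly $1$.

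Carrying this out, the monomial evaluated at $\underline{x}+\underline{m}$ factors as
\[ \prod_{i \in \underline{n}} (x_i + m_i) = \underline{x}^{\underline{n}\setminus\underline{m}} \prod_{i \in \underline{n}\cap\underline{m}} (x_i+1) \mbox{,} \]
and expanding the remaining product over all subsets of $\underline{n}\cap\underline{m}$ gives
\[ \prod_{i \in \underline{n}} (x_i + m_i) = \underline{x}^{\underline{n}\setminus\underline{m}} \sum_{\underline{k} \, \subseteq \, \underline{n}\cap\underline{m}} \underline{x}^{\underline{k}} \mbox{.} \]
The difference $\Delta_{\underline{m}} \underline{x}^{\underline{n}}$ is then obtained by subtracting $\underline{x}^{\underline{n}} = \underline{x}^{\underline{n}\setminus\underline{m}} \cdot \underline{x}^{\underline{n}\cap\underline{m}}$, using that $\underline{n}$ is the disjoint union of $\underline{n}\setminus\underline{m}$ and $\underline{n}\cap\underline{m}$. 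The point to observe is that this subtracted term is precisely the contribution of the full subset $\underline{k}=\underline{n}\cap\underline{m}$ in the sum above, so removing it converts $\subseteq$ into $\subsetneqq$ and yields exactly the right-hand side of part~\ref{a1}.

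Part~\ref{a2} then follows instantly by specializing to $\underline{m}=\underline{1}$: here $\underline{n}\setminus\underline{m}=\emptyset$, so the prefactor $\underline{x}^{\underline{n}\setminus\underline{m}}$ equals $1$, while $\underline{n}\cap\underline{m}=\underline{n}$, turning the sum into $\sum_{\underline{k} \, \subsetneqq \, \underline{n}} \underline{x}^{\underline{k}}$. There is no serious obstacle in this argument; the only thing requiring care is the bookkeeping with the set-indexed notation, in particular checking that the monomial being subtracted matches the top term $\underline{k}=\underline{n}\cap\underline{m}$ of the expansion, so that the passage from $\subseteq$ to $\subsetneqq$ is exact.
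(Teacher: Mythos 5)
Your argument is correct and is exactly the computation the paper leaves implicit: the paper's proof simply declares part \ref{a1} obvious and part \ref{a2} a special case, and your expansion of $\prod_{i\in\underline{n}}(x_i+m_i)$ over subsets of $\underline{n}\cap\underline{m}$, with the subtracted monomial accounting for the top term $\underline{k}=\underline{n}\cap\underline{m}$, is the standard way to make that explicit. No discrepancy with the paper's approach.
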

\begin{proof}
\ref{a1} is obvious and \ref{a2} is just a particular case of \ref{a1}.
\end{proof}

Following the way of the proof it turns out that it suffices to verify
the following three equations.
\begin{lem} $\mspace{50mu}$
\begin{enumerate}
\item \label{i1}
\[ \Delta_{e_i'} l_i =0 \mbox{.} \]
\item \label{i2}
\[ \Delta_{\sum_{j=1}^{2p-1} e_j' } ( \sum_{j=1}^{2p-1} l_j ) =-1
\mbox{.} \]
\item \label{i3}
\[ \Delta_{\sum_{j \ne i} e_j'} (l_i + \sum_{j=0}^{2p-1} l_j) =0
\mbox{.} \] 

\end{enumerate}
\end{lem}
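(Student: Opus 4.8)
The plan is to handle the three identities separately, in each case expanding the finite difference with Lemma~\ref{lem5} and then tracking which monomials survive reduction modulo~$p$. Identity~\ref{i1} requires no computation: every monomial of $l_i$ belongs to $\mathcal{L}_i^0$ and therefore does not involve $x_i$, so shifting the argument by $e_i'$ leaves $l_i$ unchanged. Equivalently, Lemma~\ref{lem5}\ref{a1} applied with $\underline{m}=e_i$ produces an empty inner sum because $\underline{n}\cap\{i\}=\emptyset$ for every $\underline{x}^{\underline{n}}\in\mathcal{L}_i^0$.

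For \ref{i2} and \ref{i3} I would first isolate the combinatorial identity $\sum_{j=1}^{2p-1}l_j=(p-1)\sum_{\underline{x}^{\underline{n}}\in\mathcal{L}}\underline{x}^{\underline{n}}$, valid because a $p$-subset $\underline{n}$ of $\{1,\dots,2p-1\}$ misses exactly $(2p-1)-p=p-1$ indices, so $\underline{x}^{\underline{n}}$ appears in precisely $p-1$ of the $l_j$; over $\mathbb{Z}_p$ this reads $-\sum_{\mathcal{L}}\underline{x}^{\underline{n}}$. In \ref{i2} the shift vector $\sum_j e_j'$ is the all-ones vector, so Lemma~\ref{lem5}\ref{a2} yields $\Delta_{\sum_j e_j'}\sum_j l_j=(p-1)\sum_{\underline{x}^{\underline{n}}\in\mathcal{L}}\sum_{\underline{k}\subsetneqq\underline{n}}\underline{x}^{\underline{k}}$. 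In \ref{i3}, writing $L_i^+$ for the sum of the monomials in $\mathcal{L}_i^+$, the same identity and $\sum_{\mathcal{L}}\underline{x}^{\underline{n}}=l_i+L_i^+$ give $l_i+\sum_j l_j=p\,l_i+(p-1)L_i^+\equiv-L_i^+$; since the shift vector $\sum_{j\ne i}e_j'$ equals $\underline{1}-e_i$, Lemma~\ref{lem5}\ref{a1} expands $\Delta_{\sum_{j\ne i}e_j'}L_i^+$ as $x_i\sum_{i\in\underline{n}}\sum_{\underline{k}\subsetneqq\underline{n}\setminus\{i\}}\underline{x}^{\underline{k}}$.

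The decisive step is to count coefficients. In each expansion the coefficient of a fixed monomial $\underline{x}^{\underline{k}}$ with $|\underline{k}|=t$ counts the admissible enlargements of the index set $\underline{k}$: this is $\binom{2p-1-t}{p-t}$ in \ref{i2} (for $0\le t<p$) and $\binom{2p-2-t}{p-1-t}$ in \ref{i3} (for $0\le t\le p-2$). I would then show every such coefficient is divisible by $p$ apart from the constant term of \ref{i2}. Indeed, for $1\le t\le p-1$ the integer $2p-1-t$ lies in $\{p,\dots,2p-2\}$, so the numerator factorial contains exactly one factor $p$, while both denominator factorials are products of integers below $p$ and contain none; hence the coefficient has $p$-adic valuation $1$. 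The identical count disposes of every coefficient in \ref{i3}. The unique survivor is the constant $\binom{2p-1}{p}$, evaluated by the elementary cancellation $\binom{2p-1}{p}=\frac{(2p-1)(2p-2)\cdots(p+1)}{(p-1)!}\equiv\frac{(p-1)!}{(p-1)!}=1\pmod p$. Therefore $\Delta_{\sum_j e_j'}\sum_j l_j=(p-1)\cdot1=-1$ and $\Delta_{\sum_{j\ne i}e_j'}L_i^+=0$, which are precisely \ref{i2} and \ref{i3}.

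The one point demanding care is this valuation bookkeeping: checking that the top index of the numerator factorial always falls in the window $[p,2p-2]$, so that exactly one factor of $p$ is absorbed and none is cancelled by the denominator, and separately evaluating the lone surviving constant. Once that is in place the rest is a mechanical substitution into Lemma~\ref{lem5}, and the three identities follow.
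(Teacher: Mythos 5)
Your proposal is correct and follows essentially the same route as the paper: part \ref{i1} from $l_i$ not involving $x_i$, the identity $\sum_j l_j=(p-1)\sum_{\mathcal{L}}\underline{x}^{\underline{n}}$, reduction of \ref{i3} to $-\sum_{\mathcal{L}_i^+}\underline{x}^{\underline{n}}$, expansion via Lemma~\ref{lem5}, and the observation that the resulting binomial coefficients $\binom{2p-1-t}{p-t}$ and $\binom{2p-2-t}{p-1-t}$ vanish modulo $p$ except for the constant $\binom{2p-1}{p}\equiv 1$. The only difference is that you spell out the $p$-adic valuation argument for these divisibilities, which the paper merely asserts.
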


\begin{proof} \hspace{1cm} \\
\ref{i1}
$l_i$ does not involve $x_i$. \\
\ref{i2}
\begin{equation} \label{e1}
\sum_{j=1}^{2p-1} l_j = \sum_{j=1}^{2p-1}
\sum_{\underline{x}^{\underline{n}} \in \mathcal{L}_i^0 }
\underline{x}^{\underline{n}} = \sum_{\underline{x}^{\underline{n}}
  \in \mathcal{L} } (p-1) \underline{x}^{\underline{n} } = -
\sum_{\underline{x}^{\underline{n}} \in \mathcal{L} }
\underline{x}^{\underline{n} } 
\end{equation}
and hence
\begin{align*}
&\Delta_{\sum_{j=1}^{2p-1} e_j' } ( \sum_{j=1}^{2p-1} l_j ) = -
\Delta_{\sum_{j=1}^{2p-1} e_j'} \sum_{\underline{x}^{\underline{n}}
  \in \mathcal{L} } \underline{x}^{\underline{n} } = - \sum_{\underline{x}^{\underline{n}}
  \in \mathcal{L} } \Delta_{\sum_{j=1}^{2p-1} e_j' }
\underline{x}^{\underline{n} } \\
\intertext{applying Lemma \ref{lem5} \ref{a2}}
&= - \sum_{\begin{subarray}{l}  \underline{n} \in \{ \, 0,1 \, \}^{2p-1} \\
    \left| \underline{n} \right| =p \end{subarray} } \mspace{6mu}
\sum_{\underline{k} \mspace{2mu} \subsetneqq \mspace{2mu}
  \underline{n}}  \underline{x}^{\underline{k}} = - \sum_{ \left|
  \underline{k} \right|  < p  } \underline{x}^{\underline{k}}
\sum_{\begin{subarray}{l} \underline{k} \mspace{2mu} \subseteq
    \underline{n}  \\ \left|
    \underline{n} \right| =p  \end{subarray} } 1 \\ &=- \sum_{ \left|
  \underline{k} \right| < p  } \binom{2p-1- \left| \underline{k} \right|
}{p- \left| \underline{k} \right| } \underline{x}^{\underline{k}} \mbox{.}
\end{align*}
The binomial coefficient $\binom{2p-1- \left| \underline{k} \right| 
}{p- \left| \underline{k} \right| } $ is divisible by $p$ if $ 1 \le \left|
\underline{k} \right| <p$ and this implies that  the remaining
polynomial is just the constant polynomial $- \binom{2p-1}{p} $  over
$\mathbb{Z}_p$. Taking into account that $\binom{2p-1}{p} \equiv 1
\pmod{p}$, we obtain \ref{i2}.
\\
\ref{i3} 
Substituting (\ref{e1}) into the equation we get
\[ l_i + \sum_{j=1}^{2p-1} l_j = \sum_{\underline{x}^{\underline{n}}
  \in \mathcal{L}_i^0 } \underline{x}^{\underline{n}} -
\sum_{\underline{x}^{\underline{n}} \in \mathcal{L} }
\underline{x}^{\underline{n} } = - \sum_{\underline{x}^{\underline{n}}
    \in \mathcal{L}_i^+ } \underline{x}^{\underline{n}} \mbox{ .} \]
Now
\begin{align*}
&\Delta_{\sum_{j \ne i} e_j'}
  \left( - \sum_{\underline{x}^{\underline{n}} \in \mathcal{L}_i^+ }
  \underline{x}^{\underline{n}} \right) = -
\sum_{\underline{x}^{\underline{n}} \in \mathcal{L}_i^+ }
\Delta_{\sum_{j \ne i} e_j'} \underline{x}^{\underline{n}} \\
\intertext{by Lemma \ref{lem5} \ref{a1}} &=-
\sum_{\underline{x}^{\underline{n} } \in \mathcal{L}_i^+ } x_i
\sum_{\underline{k} \subsetneqq \underline{n} \setminus \{ i \} }
\underline{x}^{\underline{k} } 
= - x_i  \sum_{ \begin{subarray}{l} i \notin \underline{k} \\ \left|
      \underline{k} \right|  < p-1 \end{subarray} }
\underline{x}^{\underline{k} } \sum_{ \begin{subarray}{l} \{ i \} \cup
    \underline{k} \subsetneqq \underline{n} \\ \left| \underline{n} \right|
    =p \end{subarray} } 1 \\ &= - x_i \sum_{ \begin{subarray}{l} i
  \notin \underline{k} \\ \left| \underline{k} \right|  < p-1
\end{subarray} } \binom{2p-1- \left| \underline{k} \right|  -1 }{p- \left|
\underline{k} \right|  -1 } \underline{x}^{\underline{k}} \mbox{.}
\end{align*}
Now if $\left| k \right| < p-1$ then $\binom{2p-1- \left|
    \underline{k} \right| -1 }{p- \left| \underline{k} \right|  -1 }
\equiv 0 \pmod{p}$ and this proves the result.
\end{proof}

The proof of the fact that there is no linear transformation which
maps $S'$ to $T'$ is nearly the same as in the previous case provided
to $p >3$. If $p=3$ then the statement analogous to Lemma \ref{lem4}
does not hold. We leave it to the reader to work out the details and
we will do so in the next case as well.

\subsection*{Rank $2p-1+ \binom{2p-1}{p}$ } 
Here we only give the connection sets and the isomorphism of the 
Cayley graphs. The proof goes along the same lines as in the previous
cases.

Let $\mathcal{O}= \{ \, \underline{k} \subset \{ \, 1, \ldots , 2p-1
\, \} \bigm| \mspace{1mu} \left| \underline{k} \right| =p \,
\}$ and let $U'' \cong \mathbb{Z}_p^{2p-1}$ and $V'' \cong
\mathbb{Z}_p^{\binom{2p-1}{p}}$ with the bases $\{ \, e_1'',e_2'', \dotsc
,e_{2p-1}'' \, \}$ and $\{ \, f_{ \underline{k} }'' \mid \underline{k}
\in \mathcal{O} \, \}$, respectively.
For  $\left( \underline{x''} , \underline{y''} \right) \in U'' \bigoplus V''$
we define
\[ \phi'' (\underline{x}'' , \underline{y}'')= \left( \underline{x}'' ,
  \dotsc , y_{\underline{k} }'' + {\underline{x}''}^{\underline{k}} ,
  \dotsc  \right) \mbox{.} \]
For a each $1 \le i \le 2p-1 $ we define the set $$A_i= e_i''+ \left\{ \, v'' \in
V'' \Bigg| \left< v'' , \sum_{i \notin \underline{k} } f_{\underline{k}}'' \right> =0 \,
  \right\} \mbox{.} $$ For every $\underline{k} \in \mathcal{O}$ there are exactly $p$
  elements $\underline{k}_1, \dotsc , \underline{k}_p$ of $\mathcal{O}$ such that
$\left| \underline{k} \cap \underline{k}_i \right|=1$ and hence we can
define $$B_{\underline{k}} = \sum_{j \in k} e_j'' + \{ \, v'' \in V''
\mid  \left< v'' , f_{\underline{k}_1}'' + \ldots +
  f_{\underline{k}_p}'' \right> =0 \, \} \mbox{.} $$ The third type of
affine subspaces are defined by $$C_0'' = \sum_{j=1}^{2p-1} e_j'' + \{
\, \underline{v}'' \in V'' \mid \left< \underline{v}''
  ,\sum_{\underline{k} \in \mathcal{O}} f_{\underline{k}}'' \right> =0
\, \}$$ and $$C_1'' = \sum_{j=1}^{2p-1} e_j'' + \{ \, \underline{v}''
\in V'' \mid \left< \underline{v}'' ,\sum_{\underline{k} \in
    \mathcal{O}} f_{\underline{k}}'' \right> =1 \, \}\mbox{.}$$
Finally, the connection sets are given similarly to the previous cases: \[S'' = \bigg(
  \mathop{\bigcup}\limits_{i} A_i \bigg)
\cup \bigg( \mathop{\bigcup}\limits_{\underline{k} \in \mathcal{O} } B_{\underline{k}} \bigg)
\cup C_0'' \] and \[ T'' = \bigg( \mathop{\bigcup}\limits_{i} A_i
\bigg) \cup \bigg( \mathop{\bigcup}\limits_{\underline{k} \in \mathcal{O} } B_{\underline{k}}
\bigg) \cup C_1'' \] 
and $\phi''$ gives the isomorphism between the two Cayley graphs.


\begin{thebibliography}{9}
\bibitem{1} L. Babai, P. Frankl, Isomorphism of Cayley graphs I,
      Colloquia Mathematica Societatis János Bolyai, Vol 18.
      Combinatorics, Keszthely (1976), North-Holland, Amsterdam, 35-52
      (1978)
\bibitem{2} M. Conder, C. H. Li, On isomorphism of Cayley graphs,
      European J. Combin. 19, 911-919 (1998)
\bibitem{3} M. Hirasaka, M. Muzychuk, An elementary Abelian group of
      rank 4 is a CI-group, J. Comb. Theory Ser. A 94(2), 339-362 (2001)
\bibitem{4} M. Muzychuk, An elementary Abelian group of large rank is
      not a CI-group, Discrete Math. 264(1-3), 167-185 (2003)
\bibitem{5} L. A. Nowitz, A non-Cayley-invariant Cayley graph of the
elementary Abelian group of order 64, Discrete Math. 110, 223-228
(1992)
\bibitem{6} P. Spiga, Elementary Abelian p-groups of rank greater than
      or equal to 4p-2 are not CI-groups, J. Algebr. Comb. 26, 343-355 (2007)
\end{thebibliography}
\end{document}